
\documentclass{article}%
\usepackage{amsmath}
\usepackage{amsfonts}
\usepackage{amssymb}
\usepackage{graphicx}
\usepackage{wrapfig}
\usepackage{float}
\usepackage{xcolor}
\usepackage[cp1251]{inputenc}%
\setcounter{MaxMatrixCols}{30}
\providecommand{\U}[1]{\protect\rule{.1in}{.1in}}
\newtheorem{theorem}{Theorem}

\newtheorem{proposition}[theorem]{Proposition}

\newenvironment{proof}[1][Proof]{\noindent\textbf{#1.} }{\ \rule{0.5em}{0.5em}}
\numberwithin{equation}{section}
\begin{document}

\begin{center}
\textbf{A SIMPLE A-PRIORY ESTIMATE FOR 3D STATIONARY NAVIER-STOKES SYSTEM VIA INTERPOLATION}
\end{center}

\begin{center}
\textbf{S.P. Degtyarev, spdegt@mail.ru}
\end{center}
\begin{center}
\textbf{Moscow Technical University of Communications and Informatics,}
\end{center}
\begin{center}
\textbf{Moscow, Russian Federation}
\end{center}

\bigskip

{\small \noindent\textbf{Abstract. } This short communication is motivated by a paper by
O.A.Ladyzhenskaya, where a simple interpolation inequality was proved between
summable  smooth spaces. Such interpolation was applied as a technical tool for obtaining
estimates of the solution to the linear Stokes system. But such cooperative
simultaneous applying summable and smooth functional spaces even more often
occurs at investigations of nonlinear problems, where, in particular, some
bootstrap arguments are often involved. And we believe that the taking into
account such interpolation reasoning can notably simplify different bootstrap
procedures to rise up the smoothness of a solution or obtain an a-priory
estimate in smooth classes of functions. Therefore by the present paper we just would like
to attract the attention of the reader to such possibility. And for this we are
going to demonstrate an obtaining of the a-priory estimate in a smooth space to the
solution of the stationary Navier-Stokes system in a bounded (for simplicity)
domain. }

{\small \noindent\textbf{Keywords:} interpolation inequalities, a-priori
estimates, nonlinear PDE }

{\small \medskip}

{\small \noindent\textbf{MSC:} 26D10, 35K59 }

%

\section{Introduction}

This short communication is motivated by the paper \cite{1} by
O.A.Ladyzhenskaya, where a simple interpolation inequality was proved between
summable space $L_{2}(R^{3})$ and smooth H\"{o}lder space $C^{\alpha}(R^{3})$
(here $R^{3}$ is the $3$-dimensional Euclidian space, $\alpha\in(0,1)$). Such
interpolation was applied in \cite{1} as a technical tool for obtaining
estimates of the solution to the linear Stokes system. But such cooperative
simultaneous applying summable and smooth functional spaces even more often
occurs at investigations of nonlinear problems, where, in particular, some
bootstrap arguments are often involved. And we believe that the taking into
account such interpolation reasoning can notably simplify different bootstrap
procedures to rise up the smoothness of a solution or obtain an a-priory
estimate in smooth classes of functions.

Therefore by the present paper we just would like to attract the attention of
the reader to such possibility. And for this we are going to demonstrate an
obtaining of the a-priory estimate in the space $C^{2+\alpha}(\Omega)$ to the
solution of the stationary Navier-Stokes system in a bounded (for simplicity)
domain $\Omega$. Note that such estimate is classical and was obtained in
\cite{2} and here we just represent some another "the second step" of the
corresponding smooth estimate - when the integrable estimate is already obtained.

Let's now give several definitions and auxiliary facts. We are going to use
standard spaces $C^{m+\alpha}(\overline{\Omega})$ of H\"{o}lder continuous
functions $u(x)$, where $m=0,1,2,...$, $\alpha\in(0,1)$, $\Omega$ is a given
bounded domain in $R^{3}$ with smooth boundary $\partial\Omega$ of the class
$C^{m+\alpha}$, $\overline{\Omega}=\Omega\cup\partial\Omega$. The norms in the
space $C^{m+\alpha}(\overline{\Omega})$ is defined by%

\begin{equation}
\left\Vert u\right\Vert _{C^{m+\alpha}(\overline{\Omega})}\equiv
|u|_{\overline{\Omega}}^{(m+\alpha)}=|u|_{\overline{\Omega}}^{(m)}%
+\left\langle u\right\rangle _{\overline{\Omega}}^{(m+\alpha)}, \label{1.5}%
\end{equation}
where%

\begin{equation}
|u|_{\overline{\Omega}}^{(m)}\equiv{\sum\limits_{|\overline{\beta}|\leq m}%
}\max_{\overline{\Omega}}\left\vert D_{x}^{\overline{\beta}}u(x)\right\vert
,\quad\left\langle u\right\rangle _{\overline{\Omega}}^{(m+\alpha)}\equiv
{\sum\limits_{|\overline{\beta}|=m}}\left\langle D_{x}^{\overline{\beta}%
}u(x)\right\rangle _{\overline{\Omega}}^{(\alpha)}, \label{1.6}%
\end{equation}

\[
\left\langle w(x)\right\rangle _{\overline{\Omega}}^{(\alpha)}\equiv
\left\langle w(x)\right\rangle _{x,\overline{\Omega}}^{(\alpha)}\equiv
\sup_{x,y\in\overline{\Omega}}\frac{|w(x)-w(y)|}{|x-y|^{\alpha}},
\]
$\overline{\beta}=(\beta_{1},\beta_{2}, ..., \beta_{N})$ is a multiindex,
$\beta_{i}=0,1,2,...$, $|\overline{\beta}|=\beta_{1}+\beta_{2}+...+\beta_{N}$,%

\[
D_{x}^{\overline{\beta}}u=\frac{\partial^{\beta_{1}}\partial^{\beta_{2}%
}...\partial^{\beta_{N}}u(x)}{\partial x_{1}^{\beta_{1}}\partial x_{2}%
^{\beta_{2}}...\partial x_{3}^{\beta_{N}}}.
\]
Note that we also use the notation
\[
\left\vert u\right\vert _{\overline{\Omega}}^{(0)}\equiv\max_{\overline
{\Omega}}\left\vert u(x)\right\vert .
\]
Besides. we use Lebesgues spaces $L_{q}(\Omega)$ ,$q>1$, with the norm%
\[
\left\Vert u\right\Vert _{L_{q}(\Omega)}\equiv\left\Vert u\right\Vert
_{q,\Omega}=\left(
{\displaystyle\int\limits_{\Omega}}
\left\vert u(x)\right\vert ^{q}\right)  ^{\frac{1}{q}},
\]
and the Sobolevs spaces $W_{q}^{m}(\Omega)$ with the norm%
\[
\left\Vert u\right\Vert _{W_{q}^{m}(\Omega)}\equiv\left\Vert u\right\Vert
_{q,\Omega}^{(m)}=%
{\displaystyle\sum\limits_{k=0}^{m}}
{\displaystyle\sum\limits_{|\overline{\beta}|=k}}
\left\Vert D_{x}^{\overline{\beta}}u(x)\right\Vert _{q,\Omega}.
\]

We need also the following assertion on the interpolation.

\begin{proposition}
Let $l$ be a positive number, $l\in\lbrack0,2+\alpha)$, $\alpha\in(0,1)$, and
$q>1$. Let also $u(x)\in C^{2+\alpha}(\overline{\Omega})\cap L_{q}(\Omega)$.
Then with some $C=C(\Omega,q,l,\alpha)$
\begin{equation}
|u|_{\overline{\Omega}}^{(l)}\leq C\left(  |u|_{\overline{\Omega}}%
^{(l2+\alpha)}\right)  ^{\omega}\left(  ||u||_{q,\overline{\Omega}}\right)
^{1-\omega},\quad\omega=\frac{ql+3}{q\left(  2+\alpha\right)  +3}.
\label{2.11}%
\end{equation}
\end{proposition}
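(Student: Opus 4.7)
The plan is to derive the inequality from a local Taylor-expansion bound followed by optimization in a scale parameter. Since $\partial\Omega\in C^{2+\alpha}$, I would first extend $u$ to a function $\tilde u\in C^{2+\alpha}(\mathbb{R}^{3})\cap L_{q}(\mathbb{R}^{3})$ by a standard (Calder\'on/Stein) extension operator, so that
\[
|\tilde u|_{\mathbb{R}^{3}}^{(2+\alpha)} + \|\tilde u\|_{q,\mathbb{R}^{3}} \leq C(\Omega)\bigl(|u|_{\overline\Omega}^{(2+\alpha)} + \|u\|_{q,\Omega}\bigr),
\]
so the argument may proceed on full balls $B_{r}(x_{0})\subset\mathbb{R}^{3}$ for arbitrary $x_{0}\in\overline\Omega$.

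\textbf{Step 1 (pointwise bound on derivatives).} Fix $x_{0}\in\overline\Omega$, a multi-index $\overline\beta$ with $|\overline\beta|=k\leq 2$, and $r>0$. Let $P(y)=\sum_{|\overline\gamma|\leq 2}\frac{1}{\overline\gamma!}D_{x}^{\overline\gamma}\tilde u(x_{0})(y-x_{0})^{\overline\gamma}$ be the second order Taylor polynomial; the H\"older condition on $D^{2}\tilde u$ gives $|\tilde u(y)-P(y)|\leq C|u|_{\overline\Omega}^{(2+\alpha)}|y-x_{0}|^{2+\alpha}$ for $y\in B_{r}(x_{0})$. Since polynomials of degree $\leq 2$ form a finite-dimensional space, rescaling and equivalence of norms on $B_{r}(x_{0})$ yield $|D_{x}^{\overline\beta}\tilde u(x_{0})|\leq Cr^{-k-3/q}\|P\|_{L_{q}(B_{r}(x_{0}))}$. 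Combined with the triangle inequality this produces the key estimate
\[
|D_{x}^{\overline\beta}\tilde u(x_{0})|\leq Cr^{-k-3/q}\|\tilde u\|_{L_{q}(B_{r}(x_{0}))}+Cr^{2+\alpha-k}|u|_{\overline\Omega}^{(2+\alpha)}.
\]

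\textbf{Step 2 (balancing and extension to non-integer $l$).} Choose $r=\bigl(\|u\|_{q,\overline\Omega}/|u|_{\overline\Omega}^{(2+\alpha)}\bigr)^{q/(q(2+\alpha)+3)}$ so that the two terms are comparable; elementary algebra gives
\[
|D_{x}^{\overline\beta}\tilde u(x_{0})|\leq C\|u\|_{q,\overline\Omega}^{\,1-\omega_{k}}\bigl(|u|_{\overline\Omega}^{(2+\alpha)}\bigr)^{\omega_{k}},\qquad \omega_{k}=\frac{qk+3}{q(2+\alpha)+3},
\]
which is \eqref{2.11} for integer $l=k\in\{0,1,2\}$. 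If this optimal $r$ exceeds $\mathrm{diam}\,\Omega$, then $\|u\|_{q,\overline\Omega}$ and $|u|_{\overline\Omega}^{(2+\alpha)}$ are already comparable up to a geometric constant and the inequality is trivial. For non-integer $l=k+\alpha'$, $\alpha'\in(0,1)$, $k\in\{0,1,2\}$, apply the Step~1 inequality to $D_{x}^{\overline\beta}\tilde u(x)-D_{x}^{\overline\beta}\tilde u(y)$: when $|x-y|\leq r$ use the bound for $D^{k+1}\tilde u$ multiplied by $|x-y|$; when $|x-y|\geq r$ use twice the bound for $D^{k}\tilde u$. Dividing by $|x-y|^{\alpha'}$, taking supremum, and re-optimizing $r$ yields the exponent $\omega=(q(k+\alpha')+3)/(q(2+\alpha)+3)$ asserted by \eqref{2.11}.

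The main obstacle I anticipate is the uniform bookkeeping: one must verify that essentially the same scale $r$ balances the competing terms across every integer order $k\leq 2$ and every fractional part $\alpha'$, so that the single exponent $\omega=(ql+3)/(q(2+\alpha)+3)$ indeed governs the full norm $|u|^{(l)}$ without any loss. The geometry of $\partial\Omega$ enters only through the constant in the extension, and therefore no boundary-local analysis beyond that is required.
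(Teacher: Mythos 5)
The paper itself does not prove this Proposition: it defers to Theorem~2 of the preprint \cite{3}, whose argument follows Ladyzhenskaya's idea from \cite{1} (a decomposition at a mollification scale that is then optimized). Your route — Taylor polynomial of order two, equivalence of norms on the finite-dimensional space of quadratic polynomials over a rescaled ball, then balancing the two resulting terms in $r$ — is a self-contained, standard alternative in the same spirit (both arguments hinge on optimizing a length scale, and your computation that the single scale $r=\bigl(\|u\|_{q}/|u|^{(2+\alpha)}\bigr)^{q/(q(2+\alpha)+3)}$ balances every order $k$ and every fractional part is exactly why the one exponent $\omega=(ql+3)/(q(2+\alpha)+3)$ emerges); it is correct in outline and arguably more elementary than invoking mollifiers.

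Two points in your write-up need repair, though both are fixable. First, the extension bound as you state it, $|\tilde u|^{(2+\alpha)}_{\mathbb{R}^{3}}+\|\tilde u\|_{q,\mathbb{R}^{3}}\leq C\bigl(|u|^{(2+\alpha)}_{\overline\Omega}+\|u\|_{q,\Omega}\bigr)$, is not enough: in Step~2 you silently replace $\|\tilde u\|_{L_{q}(B_{r})}$ by $\|u\|_{q,\Omega}$, and if instead you could only use the mixed bound, the optimized first term would contain a contribution of the form $\bigl(|u|^{(2+\alpha)}\bigr)^{1+\omega_{k}}\|u\|_{q}^{-\omega_{k}}$, which is not dominated by the right-hand side of \eqref{2.11}. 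You need an extension operator bounded \emph{separately} on $L_{q}$ and on $C^{2+\alpha}$ (Hestenes-type reflection for a $C^{2+\alpha}$ boundary, or Stein's operator, does satisfy both), and you should say so. Second, your prescription for non-integer $l=k+\alpha'$ with $k=2$ cannot "use the bound for $D^{k+1}\tilde u$", since $u$ is only $C^{2+\alpha}$; for $l\in(2,2+\alpha)$ you must instead use the $\alpha$-H\"older seminorm of $D^{2}\tilde u$ on the near range, $|D^{2}\tilde u(x)-D^{2}\tilde u(y)|\leq |u|^{(2+\alpha)}r^{\alpha-\alpha'}|x-y|^{\alpha'}$ for $|x-y|\leq r$ (here $\alpha'<\alpha$ precisely because $l<2+\alpha$), and the same choice of $r$ then gives the stated exponent. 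With these two corrections your argument goes through.
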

We present this Proposition without a proof. The simple proof based on the idea
from \cite{1} can be found in preprint \cite{3}, Theorem 2.

Formulate now the problem under consideration. Let a function $p(x)$ be
defined in $\overline{\Omega}$ and let $\partial p/\partial x_{i}\in
C^{\alpha}(\overline{\Omega})$, $i=1,2,3$. Let further vector-functions
$\overline{v}(x)=(v_{1}(x),v_{2}(x),v_{3}(x))$ and $\overline{f}%
(x)=(f_{1}(x),f_{2}(x),f_{3}(x))$\ be defined in $\overline{\Omega}$ and let
$\overline{v}(x)\in C^{2+\alpha}(\overline{\Omega})$, $\overline{f}(x)\in
C^{\alpha}(\overline{\Omega})$. Let at last $p(x)$ and $\overline{v}(x)$
satisfy the system of equations%
\begin{equation}
-\nu\Delta\overline{v}(x)+\left(  \overline{v}(x)\nabla\right)  \overline
{v}(x)+\nabla p(x)=\overline{f}(x),\quad x\in\Omega, \label{1.1}%
\end{equation}%
\begin{equation}
\nabla\overline{v}(x)=0,\quad x\in\Omega, \label{1.2}%
\end{equation}%
\begin{equation}
\overline{v}(x)=0,\quad x\in\partial\Omega. \label{1.3}%
\end{equation}
Here $\nu=const>0$, $\Delta=(\partial^{2}/\partial x_{1}^{2}+\partial
^{2}/\partial x_{2}^{2}+\partial^{2}/\partial x_{3}^{2})$ is the Laplace
operator, $\nabla=(\partial/\partial x_{1},\partial/\partial x_{2}%
,\partial/\partial x_{3})$, so that
\[
\overline{v}(x)\nabla=v_{1}(x)\frac{\partial}{\partial x_{1}}+v_{2}%
(x)\frac{\partial}{\partial x_{2}}+v_{3}(x)\frac{\partial}{\partial x_{3}%
},\quad\nabla\overline{v}(x)=%
{\displaystyle\sum\limits_{i=1}^{3}}
\frac{\partial v_{i}(x)}{\partial x_{i}}.
\]

By the generalized solution from $W_{2}^{1}(\Omega)$ to problem
\eqref{1.1}-\eqref{1.3} we mean (\cite{2}) functions $\overline{v}(x)\in
W_{2}^{1}(\Omega)$ and $p(x)$ such that $\overline{v}(x)$ satisfies the
integral identity%
\begin{equation}
\nu%
{\displaystyle\int\limits_{\Omega}}
{\displaystyle\sum\limits_{i,k=1}^{3}}
\frac{\partial v_{i}}{\partial x_{k}}\frac{\partial\eta_{i}}{\partial x_{k}%
}dx+%
{\displaystyle\int\limits_{\Omega}}
{\displaystyle\sum\limits_{i,l=1}^{3}}
v_{l}\frac{\partial v_{i}}{\partial x_{l}}\eta_{i}(x)dx=%
{\displaystyle\int\limits_{\Omega}}
{\displaystyle\sum\limits_{i=1}^{3}}
\eta_{i}(x)f_{i}(x)dx \label{1.8}%
\end{equation}
for an arbitrary vector-function $\overline{\eta}(x)=(\eta_{1}(x),\eta
_{2}(x),\eta_{3}(x))\in W_{2}^{1}(\Omega)$ with $\nabla\overline{\eta
}(x)\equiv0$ in $\Omega$, and $p(x)$ is defined up to a constant from
\eqref{1.1} over $\overline{v}(x)$ in the distributional sense. Note that
since $\overline{v}(x)$ and $p(x)$ are smooth and satisfy \eqref{1.1}-\eqref{1.3}
in the classical sense, they are a generalized solution as well.

Due to \cite{2} we have the following assertion (see \cite{2}, Chapter 5, Theorem 4).
\begin{proposition}
Under above assumption $\overline{f}(x)\in C^{\alpha}(\overline{\Omega})$
problem \eqref{1.1}-\eqref{1.3} has at least one generalized solution
$\overline{v}(x)$, $p(x)$ with $\overline{v}(x)\in W_{2}^{1}(\Omega)$.
\end{proposition}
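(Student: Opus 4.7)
I would follow the classical Galerkin construction. First, fix a countable system $\{\overline{\varphi}_k\}_{k=1}^{\infty}$ forming a basis of the closed subspace $J(\Omega)\subset W_2^1(\Omega)$ of divergence-free vector fields vanishing on $\partial\Omega$ (for instance, the eigenfunctions of the Stokes operator), and for each $N$ seek an approximate solution $\overline{v}^N(x)=\sum_{k=1}^{N}c_k^N\overline{\varphi}_k(x)$ that satisfies the integral identity \eqref{1.8} for every test vector $\overline{\eta}\in H_N:=\operatorname{span}\{\overline{\varphi}_1,\dots,\overline{\varphi}_N\}$. This translates into a nonlinear algebraic system for the coefficients $c_k^N$.

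The key a priori bound is obtained by testing \eqref{1.8} with $\overline{\eta}=\overline{v}^N$ itself. Because $\nabla\overline{v}^N=0$ and $\overline{v}^N$ vanishes on $\partial\Omega$, the trilinear convective term disappears, and one is left with
\begin{equation}
\nu\int_{\Omega}|\nabla\overline{v}^N|^2\,dx=\int_{\Omega}\overline{f}\cdot\overline{v}^N\,dx\leq C\,\|\overline{f}\|_{2,\Omega}\,\|\nabla\overline{v}^N\|_{2,\Omega}
\end{equation}
by the Poincar\'e inequality, which yields the uniform bound $\|\nabla\overline{v}^N\|_{2,\Omega}\leq C(\nu)\|\overline{f}\|_{2,\Omega}$. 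Combined with a standard Brouwer fixed-point argument applied to the map $P:H_N\to H_N$ whose zeros are precisely the $\overline{v}^N$ (the above coercivity shows that $(P(\overline{u}),\overline{u})_{L_2}>0$ on a sufficiently large sphere in $H_N$), this gives the existence of $\overline{v}^N$ for every $N$.

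To pass to the limit $N\to\infty$, I would extract a subsequence converging weakly in $W_2^1(\Omega)$ and strongly in $L_4(\Omega)$ to some $\overline{v}\in J(\Omega)$; the strong convergence comes from the compact embedding $W_2^1(\Omega)\hookrightarrow L_4(\Omega)$ in three dimensions. The viscous and forcing terms in \eqref{1.8} pass to the limit by weak convergence. For the convective term I would rewrite, for fixed $\overline{\eta}\in H_M$,
\begin{equation}
\int_{\Omega}v_l^N\frac{\partial v_i^N}{\partial x_l}\eta_i\,dx=-\int_{\Omega}v_l^N v_i^N\frac{\partial\eta_i}{\partial x_l}\,dx,
\end{equation}
and use strong $L_4$-convergence of the products $v_l^N v_i^N$. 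Since $\bigcup_M H_M$ is dense in $J(\Omega)$, the limit $\overline{v}$ satisfies \eqref{1.8} for every divergence-free $\overline{\eta}\in W_2^1(\Omega)$. The pressure is then recovered from the de Rham theorem: the distribution $-\nu\Delta\overline{v}+(\overline{v}\nabla)\overline{v}-\overline{f}$ annihilates all solenoidal test fields, hence coincides with $-\nabla p$ for some $p(x)$ defined up to an additive constant.

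The main obstacle is controlling the nonlinear convective term when passing to the limit. Weak $W_2^1$-convergence of $\overline{v}^N$ is insufficient; one must exploit genuine strong convergence of the approximations in some $L_q(\Omega)$ with $q>2$. This is exactly where the three-dimensional compact embedding $W_2^1\hookrightarrow L_q$ for $q<6$ is indispensable, and everything else in the argument is essentially linear bookkeeping.
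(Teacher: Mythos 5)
Your plan is correct: it is the classical Galerkin--Brouwer (acute-angle lemma) construction with the energy bound from testing by $\overline{v}^N$, compactness of $W_2^1(\Omega)\hookrightarrow L_4(\Omega)$ to handle the convective term, and de Rham's argument for the pressure. The paper itself gives no proof and simply cites Ladyzhenskaya's book (Chapter 5, Theorem 4), where essentially this very argument is carried out, so your route coincides with the source the paper relies on.
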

We suppose further that the weak $W_{2}^{1}(\Omega)$ - norm  of $\overline{v}(x)$ is somehow estimated
already and
\begin{equation}
|| \overline{v}(x)|| _{2,\Omega}^{(1)}\leq C(\Omega
,\max_{\overline{\Omega}}\left\vert \overline{f}(x)\right\vert ). \label{1.4}%
\end{equation}

On the ground of the well known the Sobolev embedding theorem in our $3d$ space dimension  we can infer
from this proposition the following embedding
\begin{equation}
|| \overline{v}(x)|| _{6,\Omega}\leq C|| \overline
{v}(x)||_{2,\Omega}^{(1)},\quad\text{since}\quad \frac
{2}{3} - \frac{1}{2}=\frac{1}{6},\quad6=2^{\ast}=\frac{3\cdot2}{3-2}. \label{1.7}%
\end{equation}
Now we are ready to obtain an a-priory estimate of a solution to
\eqref{1.1}-\eqref{1.3} in the smooth space $C^{2+\alpha}(\Omega)$ and this
will be done in the next section.

\section{A-priory estimate}

We have the following main assertion.
\begin{theorem}
If in problem \eqref{1.1}-\eqref{1.3} $\overline{f}(x)\in C^{\alpha}%
(\overline{\Omega})$, then any solution $\overline{v}(x)$, $p(x)$ to this
problem from the space $\overline{v}(x)\in C^{2+\alpha}(\overline{\Omega})$,
$\nabla p(x)\in C^{\alpha}(\overline{\Omega})$ obey the estimate 
\[
\left\vert \overline{v}(x)\right\vert _{\overline{\Omega}}^{(2+\alpha)}\leq
C\left\vert \overline{f}(x)\right\vert _{\overline{\Omega}}^{(\alpha
)}+C\left(  \left\Vert \overline{v}(x)\right\Vert _{2,\Omega}^{(1)}\right)
^{B}, \ \ \ B=\frac{2 - a_{1}}{1 - a_{1}}, \ \ a_{1}=\frac{6\left(  2+\alpha\right)  }{6\left(  2+\alpha\right)  +3}.
\]
\end{theorem}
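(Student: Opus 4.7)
The plan is to treat \eqref{1.1} as an inhomogeneous linear Stokes system by moving the convective term to the right-hand side, i.e.\ to consider
\[
-\nu\Delta\overline{v}+\nabla p = \overline{f} - (\overline{v}\nabla)\overline{v}, \qquad \nabla\overline{v}=0, \qquad \overline{v}|_{\partial\Omega}=0,
\]
and to apply the classical Schauder a-priory estimate for the Stokes problem in a smooth bounded domain, which under the present assumptions on $\partial\Omega$ yields
\[
|\overline{v}|_{\overline{\Omega}}^{(2+\alpha)} + |\nabla p|_{\overline{\Omega}}^{(\alpha)} \le C\left(|\overline{f}|_{\overline{\Omega}}^{(\alpha)} + |(\overline{v}\nabla)\overline{v}|_{\overline{\Omega}}^{(\alpha)}\right).
\]
After this, the whole problem reduces to estimating the nonlinear term on the right in a way that (i) depends on $|\overline v|^{(2+\alpha)}_{\overline\Omega}$ with a power strictly less than one, so that the term may be re-absorbed on the left, and (ii) depends only on some already controlled integrable norm of $\overline v$ — concretely $\|\overline v\|_{6,\Omega}$, which by \eqref{1.7} is majorised by $\|\overline v\|_{2,\Omega}^{(1)}$.

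For the convective term I would apply the elementary H\"older product rule $\langle uw\rangle^{(\alpha)}\le |u|^{(0)}\langle w\rangle^{(\alpha)}+|w|^{(0)}\langle u\rangle^{(\alpha)}$ to each summand $v_{j}\,\partial_{x_{j}}v_{i}$, which leads to
\[
|(\overline{v}\nabla)\overline{v}|_{\overline{\Omega}}^{(\alpha)} \le C\bigl(|\overline v|_{\overline{\Omega}}^{(0)}\,|\overline v|_{\overline{\Omega}}^{(1+\alpha)} + |\overline v|_{\overline{\Omega}}^{(\alpha)}\,|\overline v|_{\overline{\Omega}}^{(1)}\bigr).
\]
Next I would apply Proposition~1 with $q=6$ to each of the four factors, using the exponents $\omega(l)=(6l+3)/(6(2+\alpha)+3)$. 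The arithmetic observation at the heart of the proof is that the sum of these exponents is the same in both products, namely
\[
\omega(0)+\omega(1+\alpha)=\omega(\alpha)+\omega(1)=\frac{6(1+\alpha)+6}{6(2+\alpha)+3}=\frac{6(2+\alpha)}{6(2+\alpha)+3}=a_{1},
\]
which simply reflects that the total differential order carried by $\overline v$ is $1+\alpha$ in either factorization. Consequently, both products are dominated by $C\bigl(|\overline v|_{\overline\Omega}^{(2+\alpha)}\bigr)^{a_{1}}\|\overline v\|_{6,\Omega}^{2-a_{1}}$.

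Since $a_{1}<1$, a standard Young inequality now gives, for any $\varepsilon>0$,
\[
\bigl(|\overline v|_{\overline\Omega}^{(2+\alpha)}\bigr)^{a_{1}}\|\overline v\|_{6,\Omega}^{2-a_{1}} \le \varepsilon\,|\overline v|_{\overline\Omega}^{(2+\alpha)} + C(\varepsilon)\,\|\overline v\|_{6,\Omega}^{(2-a_{1})/(1-a_{1})}.
\]
Inserting this in the Schauder estimate, choosing $\varepsilon$ small enough to absorb the $|\overline v|_{\overline\Omega}^{(2+\alpha)}$-term into the left-hand side, and finally invoking \eqref{1.7} to trade $\|\overline v\|_{6,\Omega}$ for $\|\overline v\|_{2,\Omega}^{(1)}$, produces exactly the claimed bound with $B=(2-a_{1})/(1-a_{1})$. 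The only step requiring real care — and the whole point of the interpolation approach — is the exponent bookkeeping in the middle paragraph: the quadratic nonlinearity must be split so that the two H\"older orders in each factor add up to exactly $1+\alpha$, because this is precisely what makes the combined exponent of $|\overline v|^{(2+\alpha)}_{\overline\Omega}$ equal to $a_{1}<1$, which in turn is the exact condition for the final Young absorption to close.
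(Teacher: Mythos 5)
Your proposal is correct and follows essentially the same route as the paper's proof: move $(\overline{v}\nabla)\overline{v}$ to the right-hand side, estimate it in $C^{\alpha}(\overline{\Omega})$ via the product rule and Proposition 1 with $q=6$ (the exponent bookkeeping $\omega(0)+\omega(1+\alpha)=\omega(\alpha)+\omega(1)=a_{1}<1$ is exactly the paper's \eqref{2.16}--\eqref{2.17}), apply Young's inequality with $\varepsilon$, invoke the linear Stokes Schauder estimate, absorb, and use \eqref{1.7}. No gaps.
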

\begin{proof}
First of all,  since we consider a solution to \eqref{1.1}-\eqref{1.3} from
the smooth class, this solution is of course a generalized solution in the
sense of \eqref{1.8} with
\begin{equation}
\overline{v}(x)\in W_{2}^{1}(\Omega),\quad\left\Vert \overline{v}%
(x)\right\Vert _{2,\Omega}^{(1)}<\infty,\quad\left\Vert \overline
{v}(x)\right\Vert _{6,\Omega}\leq C\left\Vert \overline{v}(x)\right\Vert
_{2,\Omega}^{(1)}<\infty.\label{1.9}%
\end{equation}
Consider now the nonlinear term $\left(  \overline{v}(x)\nabla\right)
\overline{v}(x)$ from \eqref{1.1} in the space $C^{\alpha}(\overline{\Omega}%
)$. Note that%
\begin{equation}
\left\vert \left(  \overline{v}(x)\nabla\right)  \overline{v}(x)\right\vert
_{\overline{\Omega}}^{(0)}\leq%
{\displaystyle\sum\limits_{i,k,l=1}^{3}}
\left\vert v_{i}\right\vert _{\overline{\Omega}}^{(0)}\left\vert
\frac{\partial v_{k}}{\partial x_{l}}\right\vert _{\overline{\Omega}}%
^{(0)},\label{1.10}%
\end{equation}%
\begin{equation}
\left\langle \left(  \overline{v}(x)\nabla\right)  \overline{v}%
(x)\right\rangle _{\overline{\Omega}}^{(\alpha)}\leq%
{\displaystyle\sum\limits_{i,k,l=1}^{3}}
\left\vert v_{i}\right\vert _{\overline{\Omega}}^{(0)}\left\langle
\frac{\partial v_{k}}{\partial x_{l}}\right\rangle _{\overline{\Omega}%
}^{(\alpha)}+%
{\displaystyle\sum\limits_{i,k,l=1}^{3}}
\left\langle v_{i}\right\rangle _{\overline{\Omega}}^{(\alpha)}\left\vert
\frac{\partial v_{k}}{\partial x_{l}}\right\vert _{\overline{\Omega}}%
^{(0)}.\label{1.11}%
\end{equation}
Bearing in mind \eqref{1.9}, we now apply \eqref{2.11} with $q=6$ and with
$l=0,\alpha,1,1+\alpha$ to each term in in the two above inequalities to
obtain the estimates%
\begin{equation}
\left\vert v_{i}\right\vert _{\overline{\Omega}}^{(0)}\leq C\left(
|v_{i}|_{\overline{\Omega}}^{(l2+\alpha)}\right)  ^{\omega_{0}}\left(
||v_{i}||_{6,\overline{\Omega}}\right)  ^{1-\omega_{0}},\quad\omega_{0}%
=\frac{3}{6\left(  2+\alpha\right)  +3},\label{1.12}%
\end{equation}%
\begin{equation}
\left\langle v_{i}\right\rangle _{\overline{\Omega}}^{(\alpha)}\leq\left\vert
v_{i}\right\vert _{\overline{\Omega}}^{(\alpha)}\leq C\left(  |v_{i}%
|_{\overline{\Omega}}^{(l2+\alpha)}\right)  ^{\omega_{\alpha}}\left(
||v_{i}||_{6,\overline{\Omega}}\right)  ^{1-\omega_{\alpha}},\quad
\omega_{\alpha}=\frac{6\alpha+3}{6\left(  2+\alpha\right)  +3},\label{1.12+1}%
\end{equation}%
\begin{equation}
\left\vert \frac{\partial v_{k}}{\partial x_{l}}\right\vert _{\overline
{\Omega}}^{(0)}\leq\left\vert v_{k}\right\vert _{\overline{\Omega}}^{(1)}\leq
C\left(  |v_{k}|_{\overline{\Omega}}^{(l2+\alpha)}\right)  ^{\omega_{1}%
}\left(  ||v_{k}||_{6,\overline{\Omega}}\right)  ^{1-\omega_{1}},\quad
\omega_{1}=\frac{6\cdot1+3}{6\left(  2+\alpha\right)  +3},\label{1.14}%
\end{equation}%
\begin{equation}
\left\langle \frac{\partial v_{k}}{\partial x_{l}}\right\rangle _{\overline
{\Omega}}^{(\alpha)}\leq\left\vert v_{k}\right\vert _{\overline{\Omega}%
}^{(1+\alpha)}\leq C\left(  |v_{k}|_{\overline{\Omega}}^{(l2+\alpha)}\right)
^{\omega_{1+\alpha}}\left(  ||v_{k}||_{6,\overline{\Omega}}\right)
^{1-\omega_{1+\alpha}},\quad\omega_{1+\alpha}=\frac{6(1+\alpha)+3}{6\left(
2+\alpha\right)  +3}.\label{1.15}%
\end{equation}
Now from \eqref{1.10}-\eqref{1.15} we have%
\[
\left\vert \left(  \overline{v}(x)\nabla\right)  \overline{v}(x)\right\vert
_{\overline{\Omega}}^{(\alpha)}=\left\vert \left(  \overline{v}(x)\nabla
\right)  \overline{v}(x)\right\vert _{\overline{\Omega}}^{(0)}+\left\langle
\left(  \overline{v}(x)\nabla\right)  \overline{v}(x)\right\rangle
_{\overline{\Omega}}^{(\alpha)}\leq
\]%
\[
\leq\left\vert \overline{v}\right\vert _{\overline{\Omega}}^{(0)}\left\vert
\overline{v}\right\vert _{\overline{\Omega}}^{(1+\alpha)}+\left\vert
\overline{v}\right\vert _{\overline{\Omega}}^{(\alpha)}\left\vert \overline
{v}\right\vert _{\overline{\Omega}}^{(1)}\leq
\]%
\begin{equation}
\leq C\left[  \left(  |\overline{v}|_{\overline{\Omega}}^{(l2+\alpha)}\right)
^{\omega_{0}+\omega_{1+\alpha}}\left(  ||\overline{v}||_{6,\overline{\Omega}%
}\right)  ^{2-\omega_{0}-\omega_{1+\alpha}}+\left(  |\overline{v}%
|_{\overline{\Omega}}^{(l2+\alpha)}\right)  ^{\omega_{\alpha}+\omega_{1}%
}\left(  ||\overline{v}||_{6,\overline{\Omega}}\right)  ^{2-\omega_{\alpha
}-\omega_{1}}\right]  =\label{2.16}%
\end{equation}%
\[
=C\left(  |\overline{v}|_{\overline{\Omega}}^{(l2+\alpha)}\right)  ^{a_{1}%
}\left(  ||\overline{v}||_{6,\overline{\Omega}}\right)  ^{a_{2}},
\]
where
\begin{equation}
a_{1}=\frac{6\left(  2+\alpha\right)  }{6\left(  2+\alpha\right)  +3}<1,\quad
a_{2}=2-a_{1}.\label{2.17}%
\end{equation}
Since $a_{1}<1$ in \eqref{2.16}, we can apply the Young inequality with
$\varepsilon$ to proceed with estimate \eqref{2.16} as%
\begin{equation}
\left\vert \left(  \overline{v}(x)\nabla\right)  \overline{v}(x)\right\vert
_{\overline{\Omega}}^{(\alpha)}\leq\varepsilon|\overline{v}|_{\overline
{\Omega}}^{(l2+\alpha)}+C\varepsilon^{-A}\left(  ||\overline{v}||_{6,\overline
{\Omega}}\right)  ^{B},\label{2.18}%
\end{equation}
where%
\begin{equation}
A=\frac{a_{1}}{(1-a_{1})a_{2}},\quad B=\frac{a_{2}}{1-a_{1}}.\label{2.19}%
\end{equation}
Moving now the term $\left(  \overline{v}(x)\nabla\right)  \overline{v}(x)$
to the right hand side of \eqref{1.1}, we can rewrite \eqref{1.1}-\eqref{1.3}
as the linear problem%
\begin{equation}
-\nu\Delta\overline{v}(x)+\nabla p(x)=\overline{g}(x)=\overline{f}(x)-\left(
\overline{v}(x)\nabla\right)  \overline{v}(x),\quad x\in\Omega,\label{2.20}%
\end{equation}%
\begin{equation}
\nabla\overline{v}(x)=0,\quad x\in\Omega,\label{2.21}%
\end{equation}%
\begin{equation}
\overline{v}(x)=0,\quad x\in\partial\Omega,\label{2.22}%
\end{equation}
where%
\begin{equation}
\left\vert \overline{g}(x)\right\vert _{\overline{\Omega}}^{(\alpha)}%
\leq\varepsilon|\overline{v}|_{\overline{\Omega}}^{(l2+\alpha)}+\left\vert
\overline{f}(x)\right\vert _{\overline{\Omega}}^{(\alpha)}+C\varepsilon
^{-A}\left(  ||\overline{v}||_{6,\overline{\Omega}}\right)  ^{B}.\label{2.23}%
\end{equation}

Applying to \eqref{2.20}-\eqref{2.22} Theorem 4 from Chapter 3 in \cite{2}, we
obtain%
\begin{equation}
\left\vert \overline{v}(x)\right\vert _{\overline{\Omega}}^{(2+\alpha
)}+\left\vert \nabla p\right\vert _{\overline{\Omega}}^{(\alpha)}\leq
C\varepsilon|\overline{v}|_{\overline{\Omega}}^{(l2+\alpha)}+C\left\vert
\overline{f}(x)\right\vert _{\overline{\Omega}}^{(\alpha)}+C\varepsilon
^{-A}\left(  ||\overline{v}||_{6,\overline{\Omega}}\right)  ^{B}.\label{2.24}%
\end{equation}
Choosing here $\varepsilon$ sufficiently small ($C\varepsilon<1$) and
absorbing the term $C\varepsilon|\overline{v}|_{\overline{\Omega}}%
^{(l2+\alpha)}$ in the left hand side we arrive at%
\[
\left\vert \overline{v}(x)\right\vert _{\overline{\Omega}}^{(2+\alpha)}\leq
C\left\vert \overline{f}(x)\right\vert _{\overline{\Omega}}^{(\alpha
)}+C\left(  ||\overline{v}||_{6,\overline{\Omega}}\right)  ^{B}\leq
C\left\vert \overline{f}(x)\right\vert _{\overline{\Omega}}^{(\alpha
)}+C\left(  \left\Vert \overline{v}(x)\right\Vert _{2,\Omega}^{(1)}\right)
^{B},
\]
which finishes the proof.
\end{proof}

\end{document}